\newtheorem{theorem}{Theorem}[section]
\newtheorem{lemma}[theorem]{Lemma}
\newtheorem{proposition}[theorem]{Proposition}
\theoremstyle{remark}
\newtheorem{remark}{Remark}[section]
\begin{document}

\title{\textbf{First-order equivalent to
Einstein-Hilbert Lagrangian}}

\author{\textsc{M. Castrill\'on L\'opez}\\
Instituto de Ciencias Matem\'aticas CSIC-UAM-UC3M-UCM\\
Departamento de Geometr\'{\i}a y Topolog\'{\i}a\\
Facultad de Matem\'aticas, UCM\\Avda.\ Complutense s/n,
28040-Madrid, Spain\\
\emph{E-mail:\/} \texttt{mcastri@mat.ucm.es}
\and \textsc{J. Mu\~{n}oz Masqu\'e}\\
Instituto de F\'{\i}sica Aplicada, CSIC\\
C/ Serrano 144, 28006-Madrid, Spain\\
\emph{E-mail:\/} \texttt{jaime@iec.csic.es}
\and \textsc{E. Rosado Mar\'{\i}a}\\
Departamento de Matem\'atica Aplicada\\Escuela T\'ecnica
Superior de Arquitectura, UPM\\
Avda.\ Juan de Herrera 4, 28040-Madrid, Spain\\
\emph{E-mail:\/} \texttt{eugenia.rosado@upm.es}}
\date{}

\maketitle

\begin{abstract}
\noindent
A first-order Lagrangian $L^\nabla $ variationally equivalent
to the second-order Einstein-Hilbert Lagrangian is introduced.
Such a Lagrangian depends on a symmetric linear connection,
but the dependence is covariant under diffeomorphisms.
The variational problem defined by $L^\nabla $ is proved
to be regular and its Hamiltonian formulation is studied,
including its covariant Hamiltonian attached to $\nabla $.

\end{abstract}

\noindent\emph{PACS 2010:\/} 04.20.Cv, 04.20.Fy, 04.50.-h,
11.10.Kk.

\noindent\emph{Mathematics Subject Classification 2010:\/}
58A20, 58E11, 58E30, 83C05.

\noindent\emph{Key words:} Bundle of metrics, Einstein-Hilbert
Lagrangian, jet bundle, linear connection.

\noindent\emph{Acknowledgements:\/} Supported by Ministerio
de Ciencia e Innovaci\'on of Spain under grant \#MTM2011--22528.

\section{Introduction}
Let $p\colon \mathfrak{M}\to  M$ be the bundle of pseudo-Riemannian
metrics of a given signature $(n^+,n^-)$, $n^++n^-=n=\dim M$,
over a connected $C^\in fty $ manifold oriented by a volume form
$\mathbf{v}\in \Omega ^n(M)$. The Einstein-Hilbert (or E-H for short)
functional is the second-order Lagrangian density $L_{EH}\mathbf{v}$
on $\mathfrak{M}$ defined along a metric $g$ by $s^g\mathbf{v}_g$,
where $s^g$\ denotes the scalar curvature of $g$ and $\mathbf{v}_g$
its Riemannian volume form; namely,
\begin{equation}
L_{EH}\circ  j^2g
\! = \! \sqrt{|\det(g_{ab})|}g^{jk}
\! \left\{
\!
\tfrac{\partial
\left(
\Gamma ^g
\right) _{jk}^i}
{\partial x^i}
\! - \! \tfrac{\partial
\left(
\Gamma ^g
\right)  _{ik}^i}
{\partial x^j}
\!+\! \left(
\Gamma ^g
\right) _{jk}^l
\left(
\Gamma ^g
\right) _{il}^i
\! - \! \left(
\Gamma ^g
\right) _{ik}^l
\left(
\Gamma ^g
\right) _{jl}^i
\!
\right\} ,
\label{LEHbis}
\end{equation}
where $
\left(
\Gamma ^g
\right) _{jk}^i$ are the Christoffel symbols
of the Levi-Civita connection $\nabla ^g$ of the metric $g$.
As is known (e.g., see \cite[3.3.1--3.3.2]{Carmeli}), the first-order
Lagrangian $L_{1}$ defined along $g$ by
$\sqrt{|\det (g_{ab})|}g^{jk}
((\Gamma ^g)_{ij}^l(\Gamma ^g)_{kl}^i
-(\Gamma ^g)_{jk}^l(\Gamma ^g)_{il}^i)$
differs from $L_{EH}$ by a divergence term, but unfortunately $L_1$
is \emph{not} an invariantly defined quantity.

Below, a geometrically defined first-order Lagrangian $L^\nabla $
(depending on an auxiliary symmetric linear connection $\nabla $ on $M$)
is introduced, which is variationally equivalent to E-H Lagrangian $L_{EH}$
and, consequently, it has the same Euler-Lagrange equations, namely Einstein's
field equations in the vacuum for arbitrary signature. In particular, this
explains why the E-H Lagrangian admits a true first-order Hamiltonian formalism.

Although $L^\nabla $ depends on an auxiliary symmetric linear connection,
this dependence is natural with respect to the action of diffeomorphisms
of $M$ on connections and on Lagrangian functions, as proved in section
\ref{dependence} below. This fact justifies the construction of such
a Lagrangian and the interest of its existence.

Furthermore, the Lagrangian $L^\nabla $ is seen to be regular and its
Hamiltonian formulation is studied, computing explicitly its momenta
functions and the covariant Hamiltonian attached to $\nabla $
in the sense of \cite{MSh}.
\section{The equivalent Lagrangian $L^\nabla $\ defined}
The difference tensor field between the Levi-Civita connection $\nabla ^g$
of a metric $g$ and a given symmetric linear connection $\nabla $ on $M$
is the $2$-covariant $1$-contravariant tensor given by,
\[
T^{g,\nabla }=\nabla ^g-\nabla
=\left(
(\Gamma ^g)_{ij}^h-\Gamma _{ij}^h
\right)
dx^i\otimes dx^j\otimes
\frac{\partial}{\partial x^h},
\]
where $(\Gamma ^g)_{jk}^i$ (resp.\ $\Gamma _{jk}^i$) are the Christoffel
symbols of the connection $\nabla ^g$ (resp.\ $\nabla $). A Lagrangian
function $L^\nabla $ on the bundle of metrics
$p\colon \mathfrak{M}\to M$ is defined as follows:
\begin{equation}
L^\nabla
\left(
j_x^2g
\right)
\mathbf{v}_x
=\left\{
s^g(x)
+c\left(
\left(
\operatorname*{alt}\nolimits_{23}
\left(
\nabla^gT^{g,\nabla
}\right)  _x
\right) ^\sharp
\right)
\right\}
\left(
\mathbf{v}
_g\right) _x,
\quad
\forall  j_x^2g\in J^2\mathfrak{M},
\label{L'}
\end{equation}
where we confine ourselves to consider coordinate systems
$(x^1,\dotsc,x^n)$ on $M$ adapted to $\mathbf{v}$, i.e.,
\[
\mathbf{v}=dx^1\wedge \ldots\wedge  dx^n,
\qquad
\mathbf{v}_g
=\sqrt{
\left\vert
\det
\left(
g_{uv}
\right)
\right\vert }
\mathbf{v},
\qquad
g=g_{uv}dx^u\otimes dx^v,
\]
$\operatorname*{alt}\nolimits_{23}
\colon \otimes ^3T^\ast M\otimes TM
\to \otimes ^3T^\ast M\otimes TM$ denotes the alternation
of the second and third covariant indices,
$^\sharp \colon \otimes ^3T^\ast M\otimes TM
\to \otimes^2T^\ast M\otimes^2TM$ is the isomorphism induced by $g$,
\[
w_1\otimes w_2\otimes w_3\otimes X\mapsto w_1\otimes w_2
\otimes (w_3)^\sharp \otimes X,
\quad
\forall  X\in T_xM,
\;
\forall w_1,w_2,w_3\in T_x^\ast M,
\]
and, finally, $c\colon \otimes^2T^\ast M\otimes^2TM\to \mathbb{R}$
denotes the (total) contraction of the first and second covariant indices with
the first and second contravariant ones, respectively. We write $L^\nabla $
in order to emphasize the fact that the Lagrangian depends on the auxiliary
symmetric linear connection $\nabla $ previously chosen.

If $y_{ij}=y_{ji}$, $i,j=1,\dotsc,n$, are the coordinates on the fibres
of $p$ induced from a coordinate system $(x^h)_{h=1}^n$ on $M$, namely,
$g_x=y_{ij}(g_x)dx^i\otimes dx^j$ for every metric $g_x$ over $x\in
M$, and $(x^h,y_{ij},y_{ij,k},y_{ij,kl}=y_{ij,lk})$ denotes the coordinate
system induced on $J^2\mathfrak{M}$, then $L_{EH}$ is locally given by,
\begin{equation}
L_{EH}=\rho
\left(
y^{ac}y^{bd}-y^{ab}y^{cd}
\right)
y_{ab,cd}+L_0,
\label{LEH}
\end{equation}
where
\begin{equation}
\left\{
\begin{array}
[c]{l}
L_0=\rho y^{ij}
\left\{
y^{hm}
\left(
y_{mr,j}G_{ih}^r-y_{mr,h}G_{ij}^r
\right)
+G_{ij}^mG_{hm}^h-G_{ih}^mG_{jm}^h
\right\} ,
\smallskip\\
\rho=\sqrt{
\left\vert
\det
\left(
y_{ij}
\right)
\right\vert },
\end{array}
\right.
\label{L0_rho}
\end{equation}
and $G_{rj}^i\colon  J^1\mathfrak{M}\to \mathbb{R}$
are defined by,
$G_{rj}^i=\tfrac{1}{2}y^{is}
\left(
y_{rs,j}+y_{js,r}-y_{rj,s}
\right) $.

If $L^{\prime \nabla }$ is the second-order Lagrangian on $\mathfrak{M}$
determined by the second summand of the right-hand side in the formula
\eqref{L'}, namely
\[
L^{\prime \nabla }(j_x^2g)=c
\left(
\left(
\operatorname*{alt}
\nolimits_{23}
\left(
\nabla^gT^{g,\nabla }
\right) _x
\right) ^\sharp
\right) ,
\]
then \eqref{L'} can equivalently be rewritten as follows:
$L^\nabla =L_{EH}+\rho L^{\prime \nabla }$ and as a calculation shows,
\begin{align}
L^{\prime \nabla }\circ  j^2g
& =g^{jr}
\left\{
\tfrac{\partial
\left(
T^{g,\nabla }
\right)  _{ri}^i}
{\partial x^j}-\tfrac{\partial
\left(
T^{g,\nabla }
\right)  _{rj}^i}
{\partial x^i}
\right.
\label{Summand}\\
& +\left(
\Gamma ^g
\right)  _{ji}^a
\left(
T^{g,\nabla }
\right) _{ra}^i
-\left(
\Gamma ^g
\right) _{jr}^a
\left(
T^{g,\nabla }
\right) _{ai}^i
\nonumber\\
& \left.
+\left(
\Gamma ^g
\right)  _{ir}^a
\left(
T^{g,\nabla }
\right) _{aj}^i
-\left(
\Gamma ^g
\right)  _{ai}^a
\left(
T^{g,\nabla }
\right)
_{rj}^i\!
\right\}
.\nonumber
\end{align}
\begin{lemma}
\label{lemma1}
The Lagrangian $L^\nabla $ is of first order.
\end{lemma}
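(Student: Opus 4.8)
The plan is to use the decomposition $L^\nabla =L_{EH}+\rho L^{\prime \nabla }$ recorded just before the statement, and to show that although each summand is genuinely of second order, their second-order parts---i.e.\ the terms linear in the top coordinates $y_{ab,cd}$---cancel exactly. Writing any function on $J^2\mathfrak{M}$ as its second-order part (the coefficient of $y_{ab,cd}$, which by $y_{ab,cd}=y_{ba,cd}=y_{ab,dc}$ may be taken symmetric in $a\leftrightarrow b$ and in $c\leftrightarrow d$) plus a remainder depending only on $(x^h,y_{ij},y_{ij,k})$, it suffices to verify that the second-order part of $L^\nabla $ vanishes.

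First I would read off from \eqref{LEH}--\eqref{L0_rho} that the second-order part of $L_{EH}$ is precisely $\rho (y^{ac}y^{bd}-y^{ab}y^{cd})y_{ab,cd}$, since $L_0$ is built from the $G_{rj}^i$ and hence involves at most first derivatives of the metric. Next I would extract the second-order part of $\rho L^{\prime \nabla }$ from \eqref{Summand}. The four products of $(\Gamma ^g)$ with $T^{g,\nabla }$ are of first order; and in the two derivative terms $\partial (T^{g,\nabla })/\partial x$ only the Levi-Civita part contributes to second order, because the Christoffel symbols $\Gamma _{ij}^h$ of the auxiliary connection $\nabla $ depend on $x$ alone. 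Using $(\Gamma ^g)_{jk}^i=G_{jk}^i=\tfrac12 y^{is}(y_{js,k}+y_{ks,j}-y_{jk,s})$ and retaining only the terms in which the total derivative falls on the explicit $y_{\cdot ,\cdot }$ (the derivative of $y^{is}$ being of first order), one finds after contraction $(\Gamma ^g)_{ri}^i=\tfrac12 y^{is}y_{is,r}$, whence the second-order part of $L^{\prime \nabla }$ equals $y^{jr}\{\tfrac12 y^{is}y_{is,rj}-\tfrac12 y^{is}(y_{rs,ij}+y_{js,ir}-y_{rj,is})\}$, where $y^{jr}=g^{jr}$.

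Finally, relabelling the summation indices and invoking the symmetries of $y_{ab,cd}$, I would reorganize this expression into the canonical form $(y^{ab}y^{cd}-y^{ac}y^{bd})y_{ab,cd}$, so that $\rho L^{\prime \nabla }$ has second-order part $\rho (y^{ab}y^{cd}-y^{ac}y^{bd})y_{ab,cd}$, the exact opposite of the second-order part of $L_{EH}$. Hence the second-order part of $L^\nabla $ vanishes and $L^\nabla $ is of first order.

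The only delicate step is this last bookkeeping: each of the four monomials must be brought, via $y_{ab,cd}=y_{ba,cd}=y_{ab,dc}$ together with a renaming of dummy indices, to one of the two canonical contractions $y^{ab}y^{cd}y_{ab,cd}$ or $y^{ac}y^{bd}y_{ab,cd}$; the two terms coming from $\tfrac12 y^{is}y_{is,rj}$ and from $-\tfrac12 y^{is}y_{rj,is}$ assemble into $y^{ab}y^{cd}y_{ab,cd}$, while the remaining two assemble into $-y^{ac}y^{bd}y_{ab,cd}$, and it is exactly here that the cancellation against \eqref{LEH} is confirmed.
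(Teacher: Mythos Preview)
Your argument is correct and follows essentially the same route as the paper: starting from the decomposition $L^\nabla =L_{EH}+\rho L^{\prime\nabla }$ and formula \eqref{Summand}, you identify the second-order contribution of $\rho L^{\prime\nabla }$ and check that it cancels the second-order part $\rho(y^{ac}y^{bd}-y^{ab}y^{cd})y_{ab,cd}$ of $L_{EH}$ from \eqref{LEH}. The paper does the same cancellation but records the full identity \eqref{Summand2}, keeping track of the first-order remainder as well (which it then uses to write down the explicit local expression for $L^\nabla$ immediately after the proof); your version is simply the top-order truncation of that computation. One cosmetic point: in your last paragraph the term you label ``$-\tfrac12 y^{is}y_{rj,is}$'' actually enters with a plus sign once the outer $-\tfrac12$ is distributed over $-y_{rj,is}$, but this does not affect your conclusion.
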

\begin{proof}
Taking the definition of $T^{g,\nabla }$ and the formulas
\eqref{Summand}, \eqref{LEHbis} into account, one obtains
{\small
\begin{align}
\sqrt{
\left\vert
\det
\left(
g_{uv}
\right)
\right\vert }
(L^{\prime\nabla }\circ  j^2g)
& =-L_{EH}\circ  j^2g
\label{Summand2}\\
& +\sqrt{
\left\vert
\det\left(
g_{uv}
\right)
\right\vert }
g^{jr}
\left\{
\left(
\Gamma ^g
\right) _{ji}^a
\left(
\Gamma ^g
\right) _{ra}
^i-\left(
\Gamma ^g
\right) _{ai}^a
\left(
\Gamma ^g
\right) _{rj}
^i\right\}
\nonumber\\
& -\sqrt{
\left\vert
\det
\left(
g_{uv}
\right)
\right\vert }
g^{jr}
\left\{
\dfrac{\partial\Gamma _{ri}^i}{\partial x^j}
-\dfrac{\partial\Gamma _{rj}^i}{\partial x^i}
+\left(
\Gamma ^g
\right)  _{ji}^a\Gamma _{ra}
^i\right.
\nonumber\\
& \left.
-\left(
\Gamma ^g
\right)  _{jr}^a
\Gamma _{ai}^i
+\left(
\Gamma ^g
\right) _{ir}^a
\Gamma _{aj}^i
-\left(
\Gamma ^g
\right)
_{ai}^a\Gamma _{rj}^i
\right\} .
\nonumber
\end{align}
}
Hence $(\rho L^{\prime \nabla }+L_{EH})\circ  j^2g$
depends on the values of the metric $g$
and its first derivatives only.
\end{proof}
In fact, the following local expression is readily deduced:
\[
L^\nabla =\rho y^{jr}
\left\{
G_{ji}^aT_{ra}^i-G_{ai}^aT_{rj}^i
+G_{jr}^a\Gamma _{ai}^i
-G_{ir}^a\Gamma _{aj}^i
-\frac{\partial \Gamma _{ri}^i}{\partial x^j}
+\frac{\partial\Gamma _{rj}^i}{\partial x^i}
\right\} ,
\]
$T_{jk}^i\colon  J^1\mathfrak{M}\to \mathbb{R}$ being the functions
defined by, $T_{ij}^h=G_{ij}^h-\Gamma _{ij}^h$.
\begin{remark}
\label{remark1}
As $L^\nabla $ has a geometrical definition, the local
expression above actually provides a global Lagrangian.
Moreover, if $\nabla $ is a flat linear connection
and one considers an adapted coordinate system to $\nabla $
(i.e., a coordinate system on which all the Christoffel symbols
of $\nabla $ vanish), then the local expression for $L^\nabla $
coincides with the local Lagrangian $L_1$ defined
in the introductory section.
\end{remark}
\section{$L^\nabla $ and $L_{EH}$ are variationally equivalent}
As a computation shows, the second summand in the definition
of $L^\nabla $ can be rewritten in terms of the metric $g$
and the auxiliary connection $\nabla $ only, as follows:
\begin{align*}
c\left(
\left(
\operatorname*{alt}\nolimits_{23}
\left(  \
nabla
^gT^{g,\nabla }
\right)
\right) ^\sharp
\right)
& =\left(
g^{js}g^{ir}-g^{jr}g^{is}
\right)
g_{ri,sj}\\
& +\tfrac{1}{2}
\left\{
\left(
2g^{ir}g^{jb}-g^{bi}g^{rj}-g^{br}g^{ij}
\right)
g^{as}
\right. \\
& +\left(
g^{ar}g^{ib}+g^{bi}g^{ra}-2g^{ir}g^{ab}
\right)
g^{js}\\
& -\left(
g^{sr}g^{jb}-g^{br}g^{sj}
\right)
g^{ai}\\
& \left.
-\left(
g^{ar}g^{sb}-g^{sr}g^{ab}
\right)
g^{ij}
\right\}
g_{ab,j}g_{rs,i}\\
& -g^{jr}
\left(
\tfrac{\partial\Gamma _{ri}^i}{\partial x^j}
-\tfrac{\partial\Gamma _{rj}^i}{\partial x^i}
\right) \\
& +\tfrac{1}{2}
\left\{
\left(
2g^{js}g^{ar}-g^{jr}g^{as}
\right)
g_{rj,s}\Gamma _{ai}^i\right. \\
& \left.
+\left(
g^{jr}g^{ab}-2g^{ar}g^{jb}
\right)
g_{ab,i}\Gamma _{rj}^i
\right\}  .
\end{align*}
\begin{lemma}
\label{lemma2}
If $D_i$ denotes the total derivative with respect to $x^i$,
then
\[
c\left(
\left(
\operatorname*{alt}\nolimits_{23}
\left(
\nabla
^gT^{g,\nabla }
\right)
\right)  ^\sharp
\right)
\mathbf{v}_g
=-\left(
D_i((L_{EH})_{\nabla}^i)\circ  j^2g
\right)
\mathbf{v},
\]
where
\begin{equation}
\left(
L_{EH}
\right)  _\nabla ^i
=\sum_{c\leq r}\tfrac{1}{2-\delta_{ib}}
\frac{\partial L_{EH}}{\partial y_{cr,ib}}
\left(
y_{cr,b}
-\left(
\Gamma _{bc}^ay_{ar}+\Gamma _{br}^ay_{ac}
\right)
\right) .
\label{Li}
\end{equation}
\end{lemma}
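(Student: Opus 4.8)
The plan is to verify this identity by direct computation in the adapted coordinates, after first reading the right-hand side geometrically. The decisive observation is that the factor $y_{cr,b}-(\Gamma_{bc}^ay_{ar}+\Gamma_{br}^ay_{ac})$ occurring in \eqref{Li} is precisely the coordinate expression of the covariant derivative $(\nabla_bg)_{cr}$ of the metric with respect to the auxiliary connection $\nabla$. Thus $(L_{EH})_\nabla^i$ is the contraction of the Hessian momentum $\partial L_{EH}/\partial y_{cr,ib}$ of the Einstein-Hilbert Lagrangian against $\nabla g$, and the assertion is exactly that $\rho L^{\prime\nabla}=-D_i((L_{EH})_\nabla^i)$; that is, the second-order summand $\rho L^{\prime\nabla}$ is minus a total divergence. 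This is the covariant rendering of the classical fact that the second-order part of $L_{EH}$ is a divergence, and it is what makes $L^\nabla$ and $L_{EH}$ variationally equivalent.

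First I would compute the Hessian momentum $\partial L_{EH}/\partial y_{cr,ib}$ from the explicit local expression \eqref{LEH}. Since the remainder $L_0$ depends only on the metric and its first derivatives, only the leading term $\rho(y^{ac}y^{bd}-y^{ab}y^{cd})y_{ab,cd}$ contributes. The delicate point is the symmetrization: the coordinate $y_{cr,ib}$ is symmetric both in the pair $(c,r)$ and in the pair $(i,b)$, so extracting the coefficient correctly requires restricting to $c\leq r$ and inserting the weight $1/(2-\delta_{ib})$, which is exactly the bookkeeping already displayed in \eqref{Li}. After this step the momentum is an explicit expression of order zero in the derivatives of $g$, polynomial in $\rho$ and the contravariant components $y^{ab}$.

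Next I would apply the total derivative $D_i$ to $(L_{EH})_\nabla^i$ and sum over $i$. Because the momentum is of order zero, the terms in which $D_i$ falls on it are linear in the first derivatives $y_{ab,i}$, while $D_i$ applied to $(\nabla_bg)_{cr}$ produces the genuine second-derivative term $y_{cr,bi}$ together with terms that are $\Gamma$ times first derivatives of $g$ and $\partial\Gamma$ times $g$. Collecting everything, $D_i((L_{EH})_\nabla^i)$ is at most of second order in $g$, as it must be in order to match $\rho L^{\prime\nabla}$.

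Finally I would compare this polynomial with $\rho$ times the explicit expression for $c((\operatorname*{alt}\nolimits_{23}(\nabla^gT^{g,\nabla}))^\sharp)$ displayed just before the lemma, matching term by term: the pure second-derivative terms in $g_{ab,cd}$, the terms in $\partial\Gamma$, and the quadratic first-derivative terms together with the products of $\Gamma$ and first derivatives of $g$. I expect the main obstacle to be exactly this index bookkeeping — keeping the numerous symmetrization factors and Kronecker deltas mutually consistent across the many terms on each side, and in particular verifying that the weights inherited from $1/(2-\delta_{ib})$ reproduce the coefficients in that expression. The conceptual content is transparent, being integration by parts made covariant through $\nabla$; the real labor lies in checking that every coefficient matches.
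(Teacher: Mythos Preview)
Your proposal is correct and follows essentially the same route as the paper: both compute $D_i((L_{EH})_\nabla^i)$ from \eqref{LEH} and \eqref{Li} and match it term by term against the explicit expression for $\rho\, c\bigl((\operatorname*{alt}_{23}(\nabla^g T^{g,\nabla}))^\sharp\bigr)$ displayed just before the lemma. The paper singles out the identity $D_i\rho=\tfrac{\rho}{2}y^{rs}y_{rs,i}$ as the one nontrivial ingredient in the bookkeeping, which you would encounter when $D_i$ hits the momentum; your geometric reading of the factor $y_{cr,b}-(\Gamma_{bc}^ay_{ar}+\Gamma_{br}^ay_{ac})$ as $(\nabla_b g)_{cr}$ is a helpful organizing remark but does not change the underlying computation.
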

From this lemma it follows that $L^\nabla $ and $L_{EH}$
are variationally equivalent as, according to the formula
\eqref{L'}, one has
\begin{align*}
\left(
L^\nabla \circ  j^2g
\right)
\mathbf{v}
& =\left(
L_{EH}\circ j^2g
\right)
\mathbf{v}
+c\left(
\left(
\operatorname*{alt}\nolimits_{23}
\left(
\nabla^gT^{g,\nabla }
\right)
\right) ^\sharp
\right)
\mathbf{v}_g\\
& =\left\{
\left(
L_{EH}-D_i
\left(
\left(
L_{EH}\right) _\nabla ^i
\right)
\right)
\circ  j^2g
\right\}
\mathbf{v}.
\end{align*}
Hence $L^\nabla =L_{EH}-D_i
\left(
\left(
L_{EH}
\right) _{\nabla}^i
\right) $
and therefore, $L^\nabla $ and $L_{EH}$ differ in a total divergence.

The proof of Lemma \ref{lemma2} follows by computing $D_i
\left(
\left(
L_{EH}
\right) _\nabla ^i
\right) $ using \eqref{LEH} and \eqref{Li}, taking the identity
$D_i\rho =\tfrac{\rho }{2}y^{rs}y_{rs,i}$ into account,
after a simple---but rather long---computation.
\section{Dependence on $\nabla $\label{dependence}}
Below, the dependence of the Lagrangian $L^\nabla $ with respect
to the symmetric linear connection $\nabla $, is analysed. First,
some geometric preliminaries are introduced.

The image of a linear connection $\nabla $ by a diffeomorphism
$\phi\colon M\to  M$ is defined to be $\left(
\phi \cdot \nabla \right) _XY=\phi \cdot
\left(
\nabla _{\phi ^{-1}\cdot X}(\phi ^{-1}\cdot Y)
\right) $,
$\forall  X,Y\in \mathfrak{X}(M)$. As is well known (e.g.,
see \cite[p.\ 643]{Epstein}), the Levi-Civita connection
of a metric transforms according to the rule:
$\phi ^{-1}\cdot \nabla ^g=\nabla ^{\phi ^\ast g}$.
Hence the following formulas hold:
\[
\phi ^{-1}\cdot T^{g,\nabla }
=T^{\phi ^\ast g,\phi ^{-1}\cdot \nabla },
\quad
S^{\phi \cdot \nabla }
=(\phi ^{-1})^\ast S^\nabla
=\phi \cdot S^\nabla ,
\quad
s^g=s^{\phi ^\ast g},
\]
where$S^\nabla (X,Y)=\operatorname*{trace}(Z\mapsto R^\nabla (Z,X)Y)$
is the Ricci tensor of $\nabla $ (e.g., see \cite[VI, p.\ 248]{KN}).
Moreover, the lift of $\phi $ to the bundle of metrics
$p\colon \mathfrak{M}\to  M$ is given by
$\bar\phi (g_x)=(\phi ^{-1})^\ast g_x$, $\forall  g_x\in p^{-1}(x)$
(cf.\ \cite{MR}); hence $p\circ \bar{\phi }=\phi \circ  p$,
and the mapping $\bar{\phi }\colon \mathfrak{M}\to \mathfrak{M}$
has an extension to the $r$-jet bundle
$\bar{\phi }^{(r)}\colon J^r\mathfrak{M}\to J^r\mathfrak{M}$
defined by, $\bar\phi ^{(r)}
\left(
j_x^rg
\right)
=j_{\phi(x)}^r(\bar\phi \circ  g\circ \phi ^{-1})$.

Let $\mathbf{v}_{\mathfrak{M}}$ be the nowhere-vanishing
$p$-horizontal $n$-form on $\mathfrak{M}$ defined as follows:
$\left(
\mathbf{v}_{\mathfrak{M}}
\right) _{g_x}=\mathbf{v}_{g_x}$, $\forall g_x\in \mathfrak{M}$,
where, as above, $\mathbf{v}_{g_x}$ denotes the Riemannian volume
form attached to $g_x$. Hence $\mathbf{v}_{\mathfrak{M}}
=\rho\mathbf{v}$, where $\rho$ is as in \eqref{L0_rho}.
Every $r$-th order Lagrangian density $\Lambda$ on $\mathfrak{M}$
can thus be written as $\Lambda=L\mathbf{v}_{\mathfrak{M}}$
for a certain Lagrangian function $L\in C^\infty (J^{r}\mathfrak{M})$
and $\Lambda $ is invariant under diffeomorphisms, i.e.,
$(\bar\phi ^{(r)})^\ast \Lambda =\Lambda $,
$\forall \phi\in \operatorname*{Diff}M$, if and only if $L$ is, i.e.,
$L\circ \bar\phi ^{(r)}=L$, as $(\bar\phi ^{(r)})^\ast \Lambda
=(L\circ \bar\phi ^{(r)})(\bar\phi ^\ast \mathbf{v}_{\mathfrak{M}})$
and, according to \cite[Proposition 7]{MV}, $\mathbf{v}_{\mathfrak{M}}$
is invariant under diffeomorphisms, i.e.,
$\bar\phi ^\ast \mathbf{v}_{\mathfrak{M}}=\mathbf{v}_{\mathfrak{M}}$.

The E-H Lagrangian density $L_{EH}\mathbf{v}$ is known to be invariant
under diffeomorhisms, i.e., $(\bar\phi ^{(2)})^\ast (L_{EH}\mathbf{v)}
=L_{EH}\mathbf{v}$, $\forall \phi\in \operatorname*{Diff}M$. In fact,
there exists a classical result by Hermann Weyl (\cite[Appendix II]{Weyl},
also see \cite{Heyde}, \cite{Lovelock}), according to which the only
$\operatorname*{Diff}M$-invariant Lagrangians on $J^2\mathfrak{M}$
depending linearly on the second-order coordinates $y_{ab,ij}$
are of the form $\lambda L_{EH}+\mu$, for scalars $\lambda $, $\mu $.

Therefore, transforming the equation $L^\nabla \mathbf{v}=L_{EH}
\mathbf{v}+L^{\prime \nabla }\mathbf{v}_{\mathfrak{M}}$
by a diffeomorphism $\phi $, one obtains
$(\bar\phi ^{(1)})^\ast
\left(
L^\nabla \mathbf{v}
\right)
=L_{EH}\mathbf{v}+(L^{\prime \nabla }\circ \bar\phi
^{(2)})\mathbf{v}_{\mathfrak{M}}$, and one is led to compute
$L^{\prime \nabla }\circ \bar\phi ^{(2)}$, which, by using
the formulas above,  is proved to transform according
to the following rule:
\begin{equation}
\label{L^prime^nabla}
L^{\prime \nabla }\circ \bar{\phi }^{(2)}
=L^{\prime \phi ^{-1}\cdot \nabla }.
\end{equation}
\section{Hamiltonian formalism}
\subsection{Regularity of $L^\nabla $}
\begin{proposition}
\label{proposition1}
For $\dim M=n\geq 3$, the Lagrangian $L^\nabla $
is regular, namely, the following square matrix of size
$\frac{1}{2}n^2(n+1)$ is non-singular:
\begin{equation}
\label{matrix1}
\left(
\frac{\partial p_w^{uv}}{\partial y_{ab,c}}
\right) _{a\leq b,c}^{u\leq v,w}
=\left(
\frac{\partial^2H^\nabla }{\partial y_{ab,c}
\partial y_{uv,w}}
\right)  _{a\leq b,c}^{u\leq v,w},
\end{equation}
where
\begin{equation}
\label{p's_H}
p_k^{ij}=\frac{\partial L^\nabla }{\partial y_{ij,k}},
\quad
H^\nabla
=\sum _{i\leq j}
\frac{\partial L^\nabla }{\partial y_{ij,k}}
y_{ij,k}-L^\nabla .
\end{equation}
\end{proposition}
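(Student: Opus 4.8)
The plan is to exploit the fact, already visible in the local expression for $L^\nabla$ displayed just after Lemma \ref{lemma1}, that $L^\nabla$ is a polynomial of degree at most two in the velocity coordinates $y_{ij,k}$: the only velocity dependence enters through the functions $G_{rj}^i=\tfrac12 y^{is}(y_{rs,j}+y_{js,r}-y_{rj,s})$, which are linear in the $y_{ij,k}$, whereas the remaining factors ($\rho$, $y^{jr}$, the $\Gamma_{\cdot\cdot}^\cdot$ and the $\partial\Gamma/\partial x$) depend only on the base point and on $y_{ij}$. Consequently $\partial^2 L^\nabla/\partial y_{ab,c}\partial y_{uv,w}$ is \emph{independent of the velocities and of the auxiliary connection $\nabla$}: every term of $L^\nabla$ containing at most one factor $G$, and in particular all $\nabla$-dependent terms, is affine in the velocities and is annihilated by the second derivative. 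The same remark proves the identity $\partial p_w^{uv}/\partial y_{ab,c}=\partial^2H^\nabla/\partial y_{ab,c}\partial y_{uv,w}$ asserted in \eqref{matrix1}, since differentiating $H^\nabla$ twice produces the third velocity-derivative of $L^\nabla$, which vanishes, plus the Hessian of $L^\nabla$. Thus the whole problem reduces to the non-degeneracy of the Hessian of the purely quadratic part
\[
Q=\rho y^{jr}\left(G_{ji}^aG_{ra}^i-G_{ai}^aG_{rj}^i\right).
\]

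First I would change variables from the velocities $y_{ij,k}$ to the symmetric ``connection'' variables $G_{rj}^i=G_{jr}^i$. This is a linear isomorphism of the $\tfrac12 n^2(n+1)$-dimensional velocity space onto the space $\mathcal S$ of tensors symmetric in the two lower indices, its inverse being the classical formula $y_{ij,k}=y_{il}G_{jk}^l+y_{jl}G_{ik}^l$. Hence $L^\nabla$ is regular if and only if the $O(g)$-invariant symmetric bilinear form obtained by polarizing $Q$ is non-degenerate on $\mathcal S$. Evaluating at a fixed point and diagonalizing the metric, I would lower the upper index, writing $\gamma_{rji}=y_{ik}G_{rj}^k$, and record that $Q$ is controlled by two distinct contractions: the ``main'' contraction in the first term $y^{jr}G_{ji}^aG_{ra}^i$, and the trace coupling in the second term, which equals, up to the factor $\rho$, the pairing of the two trace vectors $u_i=g^{rj}\gamma_{rji}$ and $w_i=g^{rk}\gamma_{rik}$ of $\gamma$.

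For the non-degeneracy itself I would decompose $\mathcal S$ into its irreducible components under the orthogonal group of $g$: the totally symmetric traceless tensors, the traceless hook ($(2,1)$-symmetry) component, and the two vector (``trace'') components carrying $u$ and $w$. Since the polarization of $Q$ is invariant, Schur's lemma forces it to act as a scalar on each traceless irreducible and as a symmetric $2\times2$ matrix on the two-dimensional multiplicity space of vectors; it then suffices to check that each of these scalars, and the determinant of the $2\times2$ block, is nonzero. This is where the hypothesis $n\ge3$ is indispensable, and where I expect the main difficulty to lie: the second term of $Q$ couples $u$ to $w$ off-diagonally, and for $n\le2$ the two trace vectors degenerate (for $n=2$ the $\Gamma\Gamma$-form is genuinely degenerate, consistently with Einstein-Hilbert being topological in dimension two), so the vector block becomes singular. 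I would therefore isolate the vector sector, compute the relevant $2\times2$ determinant explicitly as a function of $n$, and verify that it is nonzero precisely for $n\ge3$, while the traceless blocks yield manifestly nonzero scalars independent of the signature; combined with the change of variables of the second paragraph, this establishes that the matrix \eqref{matrix1} is non-singular.
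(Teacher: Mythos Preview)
Your strategy is sound and genuinely different from the paper's. The paper proceeds by brute force: it writes out $\partial^2 L^\nabla/\partial y_{ab,c}\partial y_{uv,w}$ explicitly as a polynomial in $y^{ij}$ and $\rho$, then shows the linear map $y_{ab,c}\mapsto p_w^{uv}$ is invertible by passing to a point where $y_{ij}(g_{x_0})=\varepsilon_i\delta_{ij}$, introducing the lowered combinations $\Upsilon_{rsq}=\tfrac{1+\delta_{rs}}{\rho}\,\varepsilon_r\varepsilon_s\varepsilon_q\,p_q^{rs}$, and solving for each $y_{ab,c}$ in terms of the $\Upsilon$'s by a case split on coincidences among $a,b,c$. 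The factors $1/(n-2)$ and $1/(n-2)^2$ that appear in these inversion formulas are where $n\ge 3$ enters. Your route---change variables to $G^i_{rj}$, note that the Hessian is that of $Q=\rho\,y^{jr}(G^a_{ji}G^i_{ra}-G^a_{ai}G^i_{rj})$, and block-diagonalise $Q$ via the $O(g)$-decomposition of $S^2V^\ast\otimes V$---is more structural: it explains \emph{why} the obstruction lives entirely in the two-dimensional vector (trace) sector and why the traceless pieces are automatically non-degenerate, whereas the paper's computation establishes the fact without isolating the mechanism.

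Two points you should tighten. First, ``manifestly nonzero'' on the traceless blocks is not quite manifest: on the totally symmetric traceless piece the first term of $Q$ indeed reduces to $\rho\langle\gamma,\gamma\rangle$, but on the traceless hook it equals $-\tfrac12\rho\langle\gamma,\gamma\rangle$ (check this on, say, the hook projection of $\gamma_{123}=\gamma_{213}=1$); the scalar is nonzero but you should compute it rather than assert it. Second, your proof is only a plan in the vector sector: you correctly identify that the $2\times2$ block mixing $u_i=g^{rj}\gamma_{rji}$ and $w_i=g^{rk}\gamma_{rik}$ carries the $n$-dependence and degenerates at $n=2$, but you do not actually compute its determinant. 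That computation is short (parametrise the pure-trace tensors by two vectors and evaluate $Q$), and once done your argument is complete; as written it stops one step early. The signature-independence you invoke is fine, since the Schur scalars are rational functions of $n$ alone, determined by the same polynomial identities regardless of the signs $\varepsilon_i$.
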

\begin{proof}
From the very definition of $H^\nabla $ it follows:
\[
\frac{\partial H^\nabla }{\partial y_{ab,c}}
=\sum _{i\leq j}\frac{\partial ^2L^\nabla }
{\partial y_{ab,c}\partial y_{ij,k}}y_{ij,k},
\]
and the formula \eqref{matrix1} above. Moreover,
we claim that the functions $p_w^{uv}$ depend linearly
on the variables $y_{ab,c}$. In fact, as a calculation shows,
\begin{align*}
\frac{\partial p_w^{uv}}{\partial y_{ab,c}}
& =\frac{\partial ^2L^\nabla }
{\partial y_{ab,c}\partial y_{uv,w}}\\
& =\rho y^{jr}\frac{\partial ^2}
{\partial y_{ab,c}\partial y_{uv,w}}
\left(
G_{ji}^lG_{rl}^i-G_{li}^lG_{rj}^i
\right) \\
& =\tfrac{1}{(1+\delta_{ab})(1+\delta_{uv})}
\rho
\left\{
y^{bw}
\left(
y^{au}y^{cv}+y^{av}y^{cu}
\right)
+y^{aw}
\left(
y^{bu}y^{cv}+y^{bv}
y^{cu}
\right)
\right. \\
& -y^{ab}\left(  y^{cu}y^{vw}+y^{cv}y^{uw}
\right)
-y^{uv}\left(
y^{aw}
y^{bc}+y^{ac}y^{bw}
\right) \\
& \left.
-\left(
y^{ua}y^{vb}+y^{ub}y^{va}
\right)
y^{wc}+2y^{ab}
y^{uv}y^{wc}
\right\} .
\end{align*}
Therefore, in order to prove that the matrix \eqref{matrix1}
is non-singular, it suffices to prove that the variables
$y_{ab,c}$ can be written in terms of the functions $p_w^{uv}$.
To do this, we first compute
\begin{align*}
\sum _{u,v,w}
\tfrac{1+\delta_{uv}}{\rho}p_w^{uv}y_{ur}y_{vs}y_{wq}
& =y_{qr,s}+y_{qs,r}-y_{rs,q}\\
& -\tfrac{1}{2}\sum_{a,b}y^{ab}
\left(
y_{sq}y_{ab,r}+y_{rq}y_{ab,s}
\right) \\
& +\sum_{a,b}y^{ab}y_{rs}
\left(
y_{ab,q}-y_{qa,b}
\right) .
\end{align*}
Evaluating the previous formula at $g_{x_0}$, by using adapted
coordinates (i.e., $y_{ij}(g_{x_0})=\varepsilon _i\delta _{ij}$,
$\varepsilon _i=\pm 1$), and letting $\Upsilon _{rsq}(j_{x_0}^1g)
=\tfrac{1+\delta _{rs}}{\rho }p_q^{rs}(j_{x_0}^1g)
\varepsilon _r\varepsilon _s\varepsilon _q$, it follows:
\begin{align*}
\Upsilon _{rsq}(j_{x_0}^1g)
& =y_{qr,s}(j_{x_0}^1g)+y_{qs,r}(j_{x_0}^1g)
-y_{rs,q}(j_{x_0}^1g)\\
& -\tfrac{1}{2}\sum\nolimits_a\varepsilon _a\varepsilon _q
\left(
\delta _{sq}y_{aa,r}(j_{x_0}^1g)+\delta_{rq}y_{aa,s}(j_{x_0}^1g)
\right)
\\
& +\sum\nolimits_a\varepsilon_a\varepsilon_r\delta_{rs}
\left(
y_{aa,q}(j_{x_0}^1g)-y_{qa,a}(j_{x_0}^1g)
\right) .
\end{align*}
If $q\neq r\neq s\neq q$, then
$\Upsilon _{rsq}(j_{x_0}^1g)=y_{qr,s}
(j_{x_0}^1g)+y_{qs,r}(j_{x_0}^1g)-y_{rs,q}(j_{x_0}^1g) $.
Hence
\begin{equation}
y_{qr,s}(j_{x_0}^1g)=\tfrac{1}{2}
\left(
\Upsilon _{rsq}(j_{x_0}^1g)+\Upsilon _{qsr}(j_{x_0}^1g)
\right) .
\label{y_rsq}
\end{equation}
If $q=r$, $r\neq s$, then
\begin{equation}
\Upsilon _{rsr}(j_{x_0}^1g)
=y_{rr,s}(j_{x_0}^1g)-\tfrac{1}{2}
\sum\nolimits_a\varepsilon_a
\varepsilon_ry_{aa,s}(j_{x_0}^1g).
\label{Upsilon_rsr}
\end{equation}
If $r=s$, $q\neq r$, then
\begin{align}
\Upsilon _{rrq}(j_{x_0}^1g)
& =2y_{qr,r}(j_{x_0}^1g)-y_{rr,q}
(j_{x_0}^1g)
\label{Upsilon_rrq}\\
& +\sum\nolimits_a\varepsilon_a\varepsilon_r
\left(
y_{aa,q}(j_{x_0}^1g)-y_{qa,a}(j_{x_0}^1g)
\right) .
\nonumber
\end{align}
The formula \eqref{Upsilon_rsr} can be rewritten as
\[
2\varepsilon _r\Upsilon _{rsr}(j_{x_0}^1g)
=\varepsilon _ry_{rr,s}(j_{x_0}^1g)
-\sum\nolimits_{a\neq r}\varepsilon _ay_{aa,s}(j_{x_0}
^1g).
\]
Summing up over the index $r$, $2
{\displaystyle\sum\nolimits_r}
\varepsilon_r\Upsilon _{rsr}(j_{x_0}^1g)=(2-n)
{\displaystyle\sum\nolimits_r}
\varepsilon_ry_{rr,s}(j_{x_0}^1g)$, and replacing this formula
into \eqref{Upsilon_rsr} it follows:
\[
\Upsilon _{rsr}(j_{x_0}^1g)
=y_{rr,s}(j_{x_0}^1g)-\tfrac{1}{2-n}
\varepsilon _r
{\displaystyle \sum\nolimits_a}
\varepsilon _a\Upsilon _{asa}(j_{x_0}^1g).
\]
Therefore
\begin{equation}
y_{rr,s}(j_{x_0}^1g)=\Upsilon _{rsr}(j_{x_0}^1g)
+\tfrac{\varepsilon _r}{2-n}
{\displaystyle \sum\nolimits_a}
\varepsilon _a\Upsilon _{asa}(j_{x_0}^1g).
\label{yrrs}
\end{equation}
Replacing \eqref{yrrs} into \eqref{Upsilon_rrq},
we eventually obtain
\begin{equation}
\sum\nolimits_a\varepsilon _ay_{qa,a}(j_{x_0}^1g)
=\tfrac{1}{n-2}
\sum\nolimits_a\varepsilon _a\Upsilon _{aaq}(j_{x_0}^1g)
-2\tfrac{n-1}{(n-2)^2}
{\displaystyle \sum\nolimits_a}
\varepsilon_a\Upsilon _{aqa}(j_{x_0}^1g),
\label{1}
\end{equation}
and replacing $y_{rr,q}(j_{x_0}^1g)$,
$\sum\nolimits_a\varepsilon _ay_{aa,q}(j_{x_0}^1g)$,
and $\sum\nolimits_a\varepsilon_ay_{qa,a}(j_{x_0}^1g)$
into \eqref{Upsilon_rrq} it follows:
\begin{align*}
\Upsilon _{rrq}(j_{x_0}^1g)
& =2y_{qr,r}(j_{x_0}^1g)
-\Upsilon _{rqr}(j_{x_0}^1g)
+\tfrac{n\varepsilon_r}
{\left(
n-2
\right) ^2}
{\displaystyle \sum\nolimits_a}
\varepsilon _a\Upsilon _{aqa}(j_{x_0}^1g)\\
& -\tfrac{\varepsilon _r}{n-2}
\sum\nolimits_a\varepsilon _a
\Upsilon _{aaq}(j_{x_0}^1g).
\end{align*}
Hence
\begin{align}
y_{qr,r}(j_{x_0}^1g)
& =\tfrac{1}{2}\tfrac{n-1}{n-2}
\Upsilon _{rrq}(j_{x_0}^1g)
+\tfrac{1}{2}
\left(
1-\tfrac{n}{
\left(
n-2
\right)
^2}\right)
\Upsilon _{rqr}(j_{x_0}^1g)
\label{yqrr}\\
& -\tfrac{n\varepsilon _r}{2
\left(
n-2
\right) ^2}
{\displaystyle \sum\nolimits_{a\neq r}}
\varepsilon _a\Upsilon _{aqa}(j_{x_0}^1g)
\nonumber\\
& +\tfrac{\varepsilon_r}{2
\left(
n-2
\right) }
\sum\nolimits_{a\neq r}\varepsilon_a
\Upsilon _{aaq}(j_{x_0}^1g).
\nonumber
\end{align}
If $q=r=s$, then $\Upsilon _{rrr}(j_{x_0}^1g)
=-\sum\nolimits_{a\neq r}\varepsilon_a
\varepsilon_ry_{ra,a}(j_{x_0}^1g)$. From \eqref{1} we obtain
$\sum\nolimits_{a\neq r}\varepsilon_ay_{ra,a}(j_{x_0}^1g)$
and then
\begin{align*}
\sum\nolimits_{a\neq r}\varepsilon_ay_{ra,a}(j_{x_0}^1g)
& =-\varepsilon_ry_{rr,r}(j_{x_0}^1g)
+\tfrac{1}{n-2}\sum\nolimits_a\varepsilon _a
\Upsilon _{aar}(j_{x_0}^1g)\\
& -2\tfrac{n-1}{(n-2)^2}
{\displaystyle \sum\nolimits_a}
\varepsilon_a\Upsilon _{ara}(j_{x_0}^1g)
\end{align*}
and replacing it into the previous equation,
\[
\Upsilon _{rrr}(j_{x_0}^1g)=y_{rr,r}(j_{x_0}^1g)
-\tfrac{\varepsilon _r}{n-2}\sum\nolimits_a
\varepsilon _a\Upsilon _{aar}(j_{x_0}^1g)
+2\tfrac{(n-1)\varepsilon _r}{(n-2)^2}
{\displaystyle \sum\nolimits_a}
\varepsilon_a\Upsilon _{ara}(j_{x_0}^1g).
\]
Hence
\begin{align}
y_{rr,r}(j_{x_0}^1g)
& =\Upsilon _{rrr}(j_{x_0}^1g)
+\varepsilon _r
\tfrac{1}{n-2}\sum\nolimits_a\varepsilon _a
\Upsilon _{aar}(j_{x_0}^1g)
\label{yrrr}\\
&
-2\varepsilon_r\tfrac{n-1}{(n-2)^2}
{\displaystyle\sum\nolimits_a}
\varepsilon_a\Upsilon _{ara}(j_{x_0}^1g).
\nonumber
\end{align}
The formulas \eqref{y_rsq}, \eqref{yrrs}, \eqref{yqrr},
and \eqref{yrrr} end the proof.
\end{proof}
\subsection{Hamilton-Cartan equations}
The Poincar\'e-Cartan form for the density $L^\nabla \mathbf{v}$
is the $n$-form on $J^1\mathfrak{M}$ given by
\[
\Theta_{L^\nabla \mathbf{v}}
=\sum_{i\leq j}(-1)^{k-1}p_k^{ij}dy_{ij}\wedge \mathbf{v}_k
-H^\nabla \mathbf{v},
\]
the momenta $p_k^{ij}$ and the Hamiltonian function $H^\nabla $
being defined as in \eqref{p's_H}, and the Hamilton-Cartan equations
can geometrically be written as
\begin{equation}
\label{HC}
\left(
j^1g\right) ^\ast
\left(
i_Yd\Theta _{L^\nabla \mathbf{v}}
\right) =0,
\end{equation}
for every $p^1$-vertical vector field $Y\in J^1\mathfrak{M}$,
which are known to be equivalent to Euler-Lagrange equations,
 where $p^1\colon J^1\mathfrak{M}\to M$ is the natural projection.

According to Proposition \ref{proposition1}, $(x^i,y_{jk},p_w^{uv})$,
$j\leq k$, $u\leq v$, is a coordinate system on $J^1\mathfrak{M}$.
Letting $Y=\partial /\partial y_{ab}$
and $Y=\partial /\partial p_w^{uv}$
in \eqref{HC}, it follows respectively:
\begin{align*}
\sum _k\frac{\partial
\left(
p_k^{ab}\circ  j^1g
\right) }
{\partial x^k}
& =-\frac{\partial H^\nabla }{\partial y_{ab}}\circ j^1g,\\
\frac{\partial
\left(
y_{uv}\circ
j^1g\right) }
{\partial x^w}
& =\frac{\partial H^\nabla }{\partial p_w^{uv}}\circ j^1g,
\end{align*}
which are the Hamilton-Cartan equations in the canonical formalism.
\subsection{Covariant Hamiltonian}
An Ehresmann (or non-linear) connection on a fibred manifold
$p\colon E\to  M$ is a differential $1$-form $\gamma $ on $E$
taking values in the vertical sub-bundle $V(p)$ such that
$\gamma (X)=X$ for every $X\in V(p)$, e.g., see \cite{MSarda},
\cite{MSh}, \cite{Sarda}. Given $\gamma $, one has
$T(E)=V(p)\oplus \ker \gamma $, $\ker\gamma $
being the horizontal sub-bundle attached to $\gamma $.

According to \cite{MSh}, the covariant Hamiltonian $\mathcal{H}^\gamma $
associated to a Lagrangian density $\Lambda$ on $J^1E$ with respect to
$\gamma $ is the Lagrangian density defined by setting
$\mathcal{H}^\gamma
=\left(
(p_0^1)^\ast \gamma -\theta
\right)
\wedge  \omega _\Lambda -\Lambda $, where $p^1\colon  J^1E\to  M$,
$p_0^1\colon J^1E\to  J^0E=E$ are the natural projections,
and $\omega_\Lambda $ is the Legendre form attached to $\Lambda $,
i.e., the $V^\ast (p)$-valued $p^1$-horizontal $(n-1)$-form on $J^1E$
given by
\[
\omega _\Lambda
=(-1)^{i-1}\frac{\partial L}{\partial y_i^\alpha }
dx^1\wedge \cdots \wedge \widehat{dx^i}\wedge \cdots \wedge dx^n
\otimes dy^\alpha ,
\quad
\Lambda =L\mathbf{v},
\]
and $\theta =\theta ^\alpha \otimes \partial /\partial y^\alpha $,
$\theta ^\alpha =dy^\alpha -y_i^\alpha dx^i$, is the $V(p)$-valued
contact $1$-form on $J^1E$. Locally, $\mathcal{H}^\gamma
=\left(
\left(
\gamma _i^\alpha +y_i^\alpha
\right)
\frac{\partial L}{\partial y_i^\alpha }-L
\right)
\mathbf{v}$.

Let $\pi\colon  F(M)\to  M$ be the bundle of linear frames and let
$q\colon  F(M)\to $ $\mathfrak{M}$ be the projection given by
$q(X_{1},\dotsc,X_{n})=g_x=\varepsilon_{h}w^h\otimes w^h$, where
$(w^1,\dotsc,w^n)$ is the dual coframe of $(X_{1},\dotsc,X_n)
\in F_x(M)$, i.e., $g_x$ is the metric for which $(X_1,\dotsc,X_n)$
is a $g_x$-orthonormal basis and $\varepsilon_h=1$ for $1\leq h\leq n^+$,
$\varepsilon_{h}=-1$ for $n^++1\leq h\leq n$. The projection $q$
is a principal $G$-bundle with $G=O(n^+,n^-)$. Given a symmetric linear
connection $\Gamma $ with associated covariant derivative $\nabla $,
and a tangent vector $X\in T_xM$, for every $u\in \pi^{-1}(x)$
there exists a unique $\Gamma $-horizontal tangent vector
$X_u^{h_\Gamma }\in T_u(FM)$ such that, $\pi _\ast X_u^{h_\Gamma }=X$.
Given a metric $g_x\in q^{-1}(x)$, let $u\in \pi^{-1}(x)$
be a linear frame such that $q(u)=g_x$. The projection
$q_\ast (X_{u}^{h_{\Gamma _x}})$ does not depend on the linear frame
$u$ chosen over $g_x$; we refer the reader to \cite[Lemma 3.3]{MR}
for a proof of this fact. In this way a section
$\sigma^\nabla \colon  p^\ast TM\to T\mathfrak{M}$
of the projection $p_\ast \colon T\mathfrak{M}\to  p^\ast TM$
is defined by setting $\sigma ^\nabla (g_x,X)
=q_\ast (X_{u}^{h_{\Gamma _x}})$. The retract
$\gamma ^\nabla \colon  T\mathfrak{M}\to  V(p)$ associated
to $\sigma ^\nabla $, namely,
$\gamma ^\nabla (Y)=Y-\sigma ^\nabla (p_\ast Y)$,
$\forall Y\in T_{g_x}\mathfrak{M}$, determines an Ehresmann
connection on the bundle of metrics and the Lagrangian density
$\Lambda^\nabla =L^\nabla \mathbf{v}$ admits a ``canonical''
covariant Hamiltonian $\mathcal{H}^{\gamma ^\nabla }$. Locally,
\[
\gamma ^\nabla
\left(
g_x,\partial /\partial x^j
\right)
=-\sum_{k\leq l}
\left\{
\Gamma _{jk}^a(x)y_{al
}\left(
g_x\right)
+\Gamma _{jl}
^a(x)y_{ak}
\left(
g_x
\right)
\right\}
\!\left(  \!\partial /\partial
y_{kl}\!\right)  _{g_x}.
\]
Hence, $\gamma _{kl,j}
=-\left( \Gamma _{jk}^ay_{al}+\Gamma _{jl}^ay_{ak}
\right) $, and
\[
\mathcal{H}^{\gamma ^\nabla }
=\left(
\sum_{k\leq l}
\left(
y_{kl,j}-\left(
\Gamma _{jk}^ay_{al}+\Gamma _{jl}^ay_{ak}
\right)
\right)
\frac{\partial L^\nabla }{\partial y_{kl,j}}-L^\nabla
\right)
\mathbf{v}.
\]
From a direct computation the following result is deduced:

If $\mathcal{H}^{\gamma ^\nabla }=H^{\gamma ^\nabla }\mathbf{v}$,
then
\[
H^{\gamma ^\nabla }(j_x^1g)
=L^\nabla (j_x^1g)-2\rho (g_x)s^{g,\nabla }(x),
\quad
\forall  j_x^1g\in J_x^1\mathfrak{M},
\]
where $s^{g,\nabla}$ is the scalar curvature of the symmetric
linear connection $\nabla $ with respect to the metric $g$,
namely
\[
s^{g,\nabla }=g^{jk}
\left\{
\tfrac{\partial\Gamma _{jk}^i}{\partial x^i}
-\tfrac{\partial\Gamma _{ik}^i}{\partial x^j}
+\Gamma _{jk}^l\Gamma _{il}^i-\Gamma _{ik}^l\Gamma _{jl}^i
\right\} .
\]
The Hamilton-Cartan equations for a covariant Hamiltonian
$H^{\gamma}$ attached to a connection $\gamma $ are
\begin{align*}
\sum _k\frac{\partial
\left(
p_k^{ab}\circ  j^1g
\right) }
{\partial x^k}-\sum_{u\leq v}
\left(
\frac{\partial\gamma _{uv,w}}{\partial y_{ab}}\circ g
\right)
\left(
p_w^{uv}\circ  j^1g
\right)
& =-\frac{\partial
H^{\gamma}}{\partial y_{ab}}\circ  j^1g,\\
\frac{\partial
\left(
y_{uv}\circ j^1g
\right) }{
\partial x^w}
+\gamma _{uv,w}\circ  j^1g & =\frac{\partial H^{\gamma}} {\partial
p_w^{uv}}\circ j^1g,
\end{align*}
(for example see \cite{CM}). Note that for $\gamma = 0$ (that is,
the trivial connection induced by the coordinate system) these
equations coincide with the local expression of the
Hamilton-Cartan equations for $H^\nabla $ given in \S5.2.
\section{Conclusions}
We have defined a first-order Lagrangian $L^\nabla $ on the bundle
of metrics which is variationally equivalent to the second-order
classical Einstein-Hilbert Lagrangian.

This Lagrangian depends on an auxiliary symmetric linear connection,
but this dependence is covariant under the action of the group
of diffeomorphisms.

We have also proved that the variational problem defined by $L^\nabla $
is regular and its Hamiltonian formulation has been studied, including
the covariant Hamiltonian attached to $\nabla $.

Moreover, we shoud finally mention the completely different behaviour
of $L^\nabla $ with respect to the Palatini Lagrangian.

Let $q\colon \mathfrak{C}\to  M$ be the bundle of symmetric linear
connections on $M$. The Palatini variational principle consists
in coupling a metric $g$ and a symmetric linear connection $\nabla $
as independent fields, thus defining a first-order Lagrangian density
$L_P\mathbf{v}$ on the product bundle $\mathfrak{M}\times _M\mathfrak{C}$
as follows:
\[
\left(
L_P\mathbf{v}
\right)
\left(
g_x,j_x^1\nabla
\right)
=s^{g,\nabla }(x)(\mathbf{v}_g)_x,
\]
and varying $g$ and $\nabla $ independently. The Palatini method
can also be applied to other different settings; e.g., see \cite{KL},
\cite{DP}, but below we confine ourselves to consider the classical
setting for the Palatini method. As is known, the Euler-Lagrange
equations of $L_{P}$ are the vanishing of the Ricci tensor of $g$
(Einstein's in the vacuum) and the condition $\nabla=\nabla^g$
expressing that $\nabla $ is the Levi-Civita connection of the metric.

In our case, we can similarly define a first-order Lagrangian
$\mathfrak{M}\times_M\mathfrak{C}$ by setting $L(j^1g,j^1\nabla )
=L_{HE}(j^2g)+
c\left(
(\operatorname*{alt}\nolimits_{23}(\nabla ^gT^{g,\nabla })^\sharp
\right)
\left(
\rho\circ g
\right) $.
Assuming $M$ is compact, then the action associated with $L$
is given as follows:
$\mathcal{S}(g,\nabla )=\in t_ML^\nabla (j^1g,j^1\nabla )\mathbf{v}$,
and by considering 1) an arbitrary $1$-parameter variation $g_t$ of $g$
and 2) the $1$-parameter variation $\nabla _t=\nabla +tA$ attached
to $A\in \Gamma (S^2T^\ast M\otimes TM)$ of $\nabla $, we obtain,
1) Einstein's equation and 2) $0=\in t_Mc
\left(
\operatorname*{alt}\nolimits_{23}(\nabla ^gA)^\sharp
\right)
\mathbf{v}_g$, $\forall A\in \Gamma (S^2T^\ast M\otimes TM)$,
which leads us to a contradiction.

\end{document}